\documentclass[11pt,reqno]{amsart}
\usepackage{amsmath,amssymb,amsthm}
\usepackage{graphicx}
\usepackage[all]{xypic}
\usepackage[utf8,nocaptions]{vietnam}
\usepackage{calrsfs}
\input xypic
\vfuzz2pt
\def\thmsection{section}
\def\thmchangesection{changesection}

\def\thmchangechapter{changechapter}
\def\thmchange{change}
\def\thmplain{plain}
\ifx\theoremnumberstyle\thmplain
  \theoremstyle{break-italic}
  \newtheorem{satz}{Satz}
\else
  \ifx\theoremnumberstyle\thmsection
    \theoremstyle{break-italic}
    \newtheorem{satz}{Satz}[section]
  \else
    \ifx\theoremnumberstyle\thmchange
      \swapnumbers
      \theoremstyle{break-italic}
      \newtheorem{satz}{Satz}
    \else
      \ifx\theoremnumberstyle\thmchangesection
         \swapnumbers
         \theoremstyle{break-italic}
         \newtheorem{satz}{Satz}[section]
      \else
        \ifx\theoremnumberstyle\thmchangechapter
           \swapnumbers
           \theoremstyle{break-italic}
           \newtheorem{satz}{Satz}[chapter]
        \else
          \ifx\theoremnumberstyle\thmsection
             \theoremstyle{break-italic}
             \newtheorem{satz}{Satz}[section]
          \else
            \theoremstyle{break-italic}
            \newtheorem{satz}{Satz}[section]
          \fi
        \fi
      \fi
    \fi
  \fi
\fi

\theoremstyle{break-italic}
\newtheorem{theorem}[satz]{Theorem}
\newtheorem{lemma}[satz]{Lemma}

\newtheorem{corollary}[satz]{Corollary}
\newtheorem{proposition}[satz]{Proposition}

\newtheorem*{conjecture*}{Conjecture}

\theoremstyle{break-roman}
\newtheorem{definition}[satz]{Definition}
\newtheorem{problem}[satz]{Problem}

\theoremstyle{standard}

\theoremstyle{varthm-roman}
\newtheorem*{varthm-roman}{}
\theoremstyle{varthm-italic}
\newtheorem*{varthm-italic}{}
\theoremstyle{varthm-roman-break}
\newtheorem*{varthm-roman-break}{}
\theoremstyle{varthm-italic-break}
\newtheorem*{varthm-italic-break}{}
\theoremstyle{varthm-roman-no-punctuation}
\newtheorem{varthm-roman-no-punctuation-numbered}[satz]{}
\theoremstyle{varthm-italic-no-punctuation}
\newtheorem{varthm-italic-no-punctuation-numbered}[satz]{}

\newenvironment{varthm-roman-numbered}[1]{
  \begin{varthm-roman-no-punctuation-numbered}
    \mbox{\rm\textbf{#1}}
  }{\end{varthm-roman-no-punctuation-numbered}}
\newenvironment{varthm-italic-numbered}[1]{
  \begin{varthm-italic-no-punctuation-numbered}
    \mbox{\rm\textbf{#1}}
  }{\end{varthm-italic-no-punctuation-numbered}}
\newenvironment{varthm-roman-break-numbered}[1]{
  \begin{varthm-roman-no-punctuation-numbered}
    \mbox{\rm\textbf{#1}\newline}
  }{\end{varthm-roman-no-punctuation-numbered}}
\newenvironment{varthm-italic-break-numbered}[1]{
  \begin{varthm-italic-no-punctuation-numbered}
    \mbox{\rm\textbf{#1}}\newline
  }{\end{varthm-italic-no-punctuation-numbered}}
\numberwithin{equation}{section}

\def\eex{{\accent"5E e}\kern-.385em\raise.2ex\hbox{\char'23}\kern-.08em}
\def\EES{{\accent"5E E}\kern-.5em\raise.8ex\hbox{\char'23 }}
\def\ow{o\kern-.42em\raise.82ex\hbox{
\vrule width .12em height .0ex depth .075ex \kern-0.16em \char'56}\kern-.07em}
\def\OW{O\kern-.460em\raise1.36ex\hbox{
\vrule width .13em height .0ex depth .075ex \kern-0.16em \char'56}\kern-.07em}

\def\bF{\bold{F}}
\def\bA{\bold{A}}
\def\bG{\bold{G}}
\def\bH{\bold{H}}
\def\bE{\bold{E}}
\def\bS{\bold{S}}
\def\bI{\bold{I}}
\def\bW{\bold{W}}
\def\tr{\mbox{tr}}

\begin{document}

\title[Tracial moment problems on  hypercubes ]{Tracial moment problems on   hypercubes}

 \author{Cong Trinh Le }
\address{Department of Mathematics, Quy Nhon University, 170 An Duong Vuong, Quy Nhon, Binh Dinh, Vietnam}
\email{lecongtrinh@qnu.edu.vn}

\subjclass[2010]{44A60; 30E05; 13J30; 47A57; 11E25}

\date{\today}


\keywords{Moment problem; matrix polynomial; moment functional; moment sequence; hypercube}

\begin{abstract}  In this paper we  introduce  the  \textit{tracial $K$-moment problem}  and the \textit{sequential matrix-valued $K$-moment problem} and show the equivalence of the solvability of  these problems.  Using a Haviland's theorem for matrix polynomials, we solve these  $K$-moment problems for the case where $K$ is the hypercube $[-1,1]^n$. 
\end{abstract}

\maketitle
\section{Introduction}
Let $\mathbb R [X]:=\mathbb R[X_1,\ldots,X_n]$ denote the  algebra  of polynomials in $n$ variables $X_1,\ldots, X_n$ with real coefficients. 

For a  real-valued linear functional $L$ on $\mathbb R[X]$ and  a closed subset $K\subseteq \mathbb R^n$, the \textit{$K$-moment problem} asks \textit{when does  there exist a positive Borel measure $\sigma$ on $\mathbb R^n$ supported on  $K$ such that} 
$$ L(f) = \int f(x) d\sigma(x), ~ \forall f\in \mathbb R[X]? $$
Each functional of this form is called a \textit{$K$-moment functional}.  

Another kind of $K$-moment problems for sequences of numbers is stated as follows.   For a multi-index sequence $s=(s_\alpha)_{\alpha \in \mathbb N_0^n}$, the \textit{$K$-moment problem for sequences} asks \textit{when does there exist   a positive Borel measure $\sigma$ on $\mathbb R^n$ supported on $K$ such that }
$$ s_\alpha = \int x^\alpha d\sigma(x), ~\forall \alpha \in \mathbb N_0^n, $$
where $x^\alpha = x_1^{\alpha_1}\ldots x_n^{\alpha_n}$. Each sequence of this form is called a \textit{$K$-moment sequence}.

For each sequence $s=(s_\alpha)_{\alpha \in \mathbb N_0^n}$, the \textit{Riesz   functional} $L_s$ on $\mathbb R[X]$ associated to $s$ is defined by 
$$L_s(x^\alpha)=s_{\alpha}, \forall \alpha \in \mathbb N_0^n. $$
It is easy to check that $s$ is a $K$-moment sequence if and only if $L_s$ is a $K$-moment functional. 

In the case $K=\mathbb R$ (resp. $[0, +\infty)$, $[0,1]$), we go back to the \textit{Hamburger} (resp. \textit{Stieltjes}, \textit{Hausdorff}) \textit{moment problem} which was solved completely by many authors (see, for example, a very excellent book of Schm\"udgen  \cite{Schm2} on the moment problem).  If $K$ is a \textit{compact} basic closed semi-algebraic set in $\mathbb R^n$, the $K$-moment problem was solved by Schm\"udgen (1991, \cite{Schm1}).

One of the power tools to solve $K$-moment problems is the \textit{Haviland theorem}, which was proved by E.K. Haviland (1935) and stated as follows. 

\textit{Given a linear functional $L$ on $\mathbb R[X]$ and a closed subset $K$ in $\mathbb R^n$. Then $L$ is a $K$-moment functional if and only if $L(f)\geq 0$ for all $f\geq 0$ on $K$.}

Here the notation $f\geq 0$ on $K$ means that $f(x)\geq 0, \forall x\in K$. A similar meaning is defined for the notation $f >0$ on $K$.

For a fix integer $t>0$, we denote by $M_t(\mathbb R[X])$ the algebra of $t\times t$ matrices with entries in $\mathbb R [X]$, and by ${S}_t(\mathbb R[X])$ the subalgebra of symmetric matrices. Each element $\bold{A} \in  {M}_t(\mathbb R[X])$ is a matrix whose entries are polynomials in $\mathbb R[X]$, which is called  a \emph{polynomial matrix}. $\bold{A}$ is also called a \emph{matrix polynomial},  because it can be viewed as a polynomial in $X_1,\ldots, X_n$ whose coefficients come from ${M}_t(\mathbb R)$. Namely, we can write $\bold{A}$ as 
$$  \bold{A}(X)=\sum_{|\alpha|=0}^{d} \bold{A}_{\alpha}X^{\alpha}, $$
where $\alpha=(\alpha_1,\cdots,\alpha_n) \in \mathbb N_0^{n}$, $|\alpha|:=\alpha_1+\ldots + \alpha_n$, $X^{\alpha}:=X_1^{\alpha_1}\ldots X_n^{\alpha_n}$, $\bold{A}_\alpha \in  {M}_t(\mathbb R)$.  To unify notation, throughout the paper each element of $ {M}_t(\mathbb R[X])$ is called a \emph{matrix polynomial}.    

Let $  \bold{A}(X)=\displaystyle\sum_{|\alpha|=0}^{d} \bold{A}_{\alpha}X^{\alpha} $ be a matrix polynomial. Denote 
$$ \bA_d(X):= \sum_{|\alpha|=d} \bA_\alpha X^\alpha.$$
 If $\bA_d \not \equiv 0$, the matrix polynomial $\bold{A}$ is called of \textit{degree  $d$}. In particular, when $n=1$ and $\bA(x)=\displaystyle\sum_{i=0}^d \bA_i x^i$ is a univariate  matrix polynomial of degree $d$,  the non-zero matrix $\bA_d$ is called the \textit{leading coefficient} of $\bA$. 

Recently, there are some researchs on (truncated) \textit{noncommutative  $K$-moment problems},  e.g.   Dette and Studden (2002, \cite{DS}), Burgdorf and Klep (2011, \cite{BK}), Burgdorf, Klep and Povh (2013, \cite{BKP}), Cimprič and Zalar (2013, \cite{CZ}), Kimsey and Woerdeman (2013, \cite{KiW}),  Kimsey (2011, \cite{Ki}) and the references therein. In particular, in \cite{CZ}, Cimprič and Zalar proved a version of the Haviland theorem for matrix polynomials, and they applied this development to solve the matrix-valued moment problem on compact basic closed semi-algbraic sets.

In the case where $K\subseteq \mathbb R^n$ is a basic closed semi-algebraic set defined by a   subset $G=\{g_1,\ldots,g_m\}$ of $\mathbb R[X]$, almost all solutions of the matrix-valued $K$-moment problems are obtained using  the positivity of the given linear functional on the quadratic module $M(G)^t$ in $M_t(\mathbb R[X])$, where $M(G)$ is the quadratic module  in $\mathbb R[X]$ generated by $G$ and 
$$ M(G)^t=\{\sum_{i=1}^r m_i \bA_i^T\bA_i | r\in \mathbb N, m_i \in M(G), \bA_i \in M_t(\mathbb R [X])\}. $$ 

The main aim of this paper is to introduce some kinds of $K$-moment problems for matrix polynomials and  solve these matrix-valued $K$-moment problems in the case where $K$ is  the hypercube $[-1,1]^n$. The most advantage to work on the hypercube $[-1,1]^n$ (and in general, on convex, compact polyhedra with non-empty interior) is that any  matrix polynomial which is positive definite on $[-1,1]^n$ can be represented in terms of positive definite scalar matrices (see Theorem \ref{thr-Bernstein1} for the case $n=1$ and Corollary  \ref{coro-Positive-on[-1,1]^n} for $n\geq 1$).  Then the solution of the matrix-valued $[-1,1]^n$-moment problem is obtained basing on the positivity of the given linear functional on the cone of positive definite scalar matrices, which is well described.

The paper is organized as follows. In Section 2 we introduce the definition of the \textit{tracial $K$-moment problem} and the  \textit{sequential matrix-valued $K$-moment problem} and establish an equivalence of the solvability of  these  problems (Proposition \ref{prop-relation}).  Moreover, we also prove in this section a version of Haviland's theorem for matrix polynomials which was  proved mainly by Cimprič and Zalar in \cite{CZ}. Next, in Section 3 we solve the tracial (resp. sequential matrix-valued) $[-1,1]$-moment problem, which is established based on a representation of univariate  matrix polynomials positive definite on $[-1,1]$. A solution of the tracial (resp. sequential matrix-valued) $[0,1]$-moment problem is also given in this section. Finally, in Section 4 we solve the tracial (resp. sequential matrix-valued) moment problem on the general hypercube $[-1,1]^n$.

\textbf{Notation.} For any matrix $\bold{A}\in  {M}_t(\mathbb R[X])$, the  notation $\bold {A} \geq 0$ means $\bold{A}$ is \emph{positive semidefinite},  i.e. for each ${x} \in \mathbb R^{n}$, $v^{T}\bold{A}(x)v \geq 0$   for all  $v\in \mathbb R^t$; $\bold {A} >0 $ means $\bold{A}$ is \emph{positive definite}, i.e. for each  ${x} \in \mathbb R^{n}$, $v^{T}\bold{A}(x)v > 0$   for all  $v\in \mathbb R^t \setminus\{0\}$. 

\section{Haviland's theorem for matrix polynomials}
\subsection{Matrix-valued measures and integrals}
In this section we recall some basic notions of the matrix-valued measures and integrals on a closed subset of $\mathbb R^n$,  which can be seen from the thesis of Kimsey \cite{Ki}. 

Throughout this subsection, let $X\subseteq \mathbb R^n$ be a non-empty closed set.

\begin{definition} \rm  Denote by $\mathcal{B}(X)$ the smallest $\sigma$-algebra generated from the open (or equivalently closed) subsets of  $X$.  A measure $\sigma$ defined on $\mathcal{B}(X)$ is called a \textit{Borel measure}.  A Borel measure $\sigma$ on $\mathcal{B}(X)$ is called \textit{finite} if $\sigma(X) < +\infty$. Denote by $m(X)$ the set of all finite measures on $X$. 

A measure $\sigma \in m(X)$ is called \textit{positive}  if $\sigma(E)\geq 0$ for all $E\in \mathcal{B}(X)$. The set of all finite positive Borel  measures on $X$ is denoted by $m^+(X)$. 

For each $\sigma \in m(X)$, the \textit{support} of $\sigma$ is defined by
$$ supp (\sigma):= \overline{\{E\in \mathcal{B}(X) : |\sigma|(E) > 0\}},$$
where $|\sigma|(E):=|\sigma(E)|$ for all $E\in \mathcal{B}(X)$. 
\end{definition} 

\begin{definition} \rm Let $t$ be a positive integer. Let $\sigma_{ij} \in m(X)$, $i,j=1,\ldots,t$. Define the matrix-valued function
$ \bE: \mathcal{B}(X) \longrightarrow M_t(\mathbb R) $ by 
$$\bE(A):=(\sigma_{ij}(A))_{i,j=1,\ldots,t}\in M_t(\mathbb R), \forall A\in \mathcal{B}(X).$$ 
The matrix-valued function $\bE$ defined by this way is called a \textit{matrix-valued measure} on $X$. The set of all matrix-valued measure on $X$ is denoted by $M(X)$.  The set 
$$supp(\bE):=\bigcup_{i,j=1}^t supp(\sigma_{ij}) $$
is called the \textit{support of the matrix-valued measure $\bE$}. 

If  ${\sigma}_{ij} = \sigma_{ji}$ for all $i,j=1,\ldots,t$, we say that $\bE$ is a \textit{symmetric measure}. In addition, if for all $A\in \mathcal{B}(X)$ and for all  $v\in \mathbb R^t$ we have 
$ v^T \bE(A)v \geq 0$, then $\bE$ is called a \textit{positive semidefinite} matrix-valued measure. The set of all positive semidefinite matrix-valued measures on $X$ is denoted by $M^+(X)$.
\end{definition}

\begin{definition} \rm  Let $\bE=(\sigma_{ij}) \in M(X)$. A function  $f: X \rightarrow \mathbb R$ is called \textit{$\bE$-measurable } if $f$ is $\sigma_{ij}$-measurable  for every $i,j=1,\ldots,t$. 

Let $\bE=(\sigma_{ij}) \in M(X)$ and  $f: X \rightarrow \mathbb R$ be $\bE$-measurable. The \textit{matrix-valued integral} of $f$ with respect to the matrix measure $\bE$ is defined by 
$$ \int_X f(x)d\bE(x):= \Big(\int_Xf(x)d \sigma_{ij}(x)\Big)_{i,j=1,\ldots,t} \in M_t(\mathbb R). $$
\end{definition}

\subsection{Tracial $K$-moment problems}
Let $\mathcal{L}$ be a real-valued linear functional on $M_t(\mathbb R[X])$, where $\mathbb R[X]:= \mathbb R[X_1,\ldots,X_n]$ and $K\subseteq \mathbb R^n$ a closed subset. 

\begin{definition}[{cf. \cite{CZ}}] \rm  $\mathcal{L}$ is called a \textit{tracial $K$-moment functional } if there exists a positive semidefinite matrix-valued $\bE\in M^+(\mathbb R^n)$  such that 
\begin{equation}\label{equ-tracial-functional}
 supp(\bE) \subseteq K \mbox{ and } \mathcal{L}(\bF) = \int \tr(\bF(x)d \bE(x)), ~ \forall \bF \in M_t(\mathbb R[X]).
\end{equation}
The matrix-valued measure $\bE\in M^+(\mathbb R^n)$ satisfying (\ref{equ-tracial-functional}) is called a \textit{representing measure} of the tracial $K$-moment functional $\mathcal{L}$.
\end{definition}

\begin{problem}\label{problem-tracial-functional} Let   $\mathcal{L}$ be a linear functional on $M_t(\mathbb R[X])$ and $K\subseteq \mathbb R^n$ a closed set. \textit{\textbf{The tracial $K$-moment problem }} asks 
 when does there exist a matrix-valued measure $\bE\in M^+(\mathbb R^n)$ satisfying the conditions (\ref{equ-tracial-functional})? 
\end{problem}

The following definition of matrix-valued $K$-moment sequences is learned from Kimsey  in \cite{Ki}.

\begin{definition} \rm Let $\bS=(\bS_\alpha)_{\alpha \in \mathbb N_0^n}$ be a multi-indexed sequence of symmetric matrices in $M_t(\mathbb R)$ and $K\subseteq \mathbb R^n$ a closed subset. The sequence $\bS$ is called a \textit{matrix-valued $K$-moment sequence } if 
there exists a positive semidefinite matrix-valued $\bE\in M^+(\mathbb R^n)$  such that 
\begin{equation}\label{equ-tracial-sequence}
 supp(\bE) \subseteq K \mbox{ and } \bS_\alpha = \int x^\alpha      d \bE(x), ~ \forall \alpha \in \mathbb N_0^n,
\end{equation}
where $x^\alpha=x_1^{\alpha_1}\ldots x_n^{\alpha_n}$ with $\alpha=(\alpha_1,\ldots,\alpha_n)$ and $x=(x_1,\ldots,x_n)\in \mathbb R^n$.  
 
 The matrix-valued measure $\bE\in M^+(\mathbb R^n)$ satisfying (\ref{equ-tracial-sequence}) is called a \textit{representing measure} of the   matrix-valued $K$-moment sequence $\bS$.
\end{definition}

\begin{problem}\label{problem-tracial-functional}  Let $\bS=(\bS_\alpha)_{\alpha \in \mathbb N_0^n}$ be a multi-indexed sequence of symmetric matrices in $M_t(\mathbb R)$ and $K\subseteq \mathbb R^n$ a closed set. 
\textit{\textbf{The sequential matrix-valued $K$-moment problem}} asks when does there exist a matrix-valued measure $\bE\in M^+(\mathbb R^n)$ satisfying the conditions (\ref{equ-tracial-sequence})?
\end{problem}

For each matrix-valued sequence $\bS=(\bS_\alpha)_{\alpha \in \mathbb N_0^n}$, let us consider the real-valued  function  $\mathcal{L}_\bS$ on $M_t(\mathbb R[X])$ defined as follows. For each $\bF(X)=\displaystyle\sum_{|\alpha|=0}^d \bF_\alpha X^\alpha \in M_t(\mathbb R[X])$, define
$$ \mathcal{L}_\bS(\bF):=\displaystyle\sum_{|\alpha|=0}^d\tr(\bF_\alpha \bS_\alpha). $$
It is easy to check that $\mathcal{L}_{\bS}$ is   \textit{linear}, which is called the \textit{Riesz functional associated to the sequence $\bS$}.  

\begin{proposition}\label{prop-relation} Let $\bS=(\bS_\alpha)_{\alpha \in \mathbb N_0^n}$ be a   sequence of symmetric matrices in $M_t(\mathbb R)$ and $K\subseteq \mathbb R^n$ a closed set.  Then $\bS$ is a matrix-valued $K$-moment sequence if and only if $\mathcal{L}_\bS$ is a tracial $K$-moment functional.
\end{proposition}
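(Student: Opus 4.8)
The plan is to observe that both notions in the statement --- that $\bS$ is a matrix-valued $K$-moment sequence, and that $\mathcal{L}_\bS$ is a tracial $K$-moment functional --- assert the existence of a representing measure $\bE\in M^+(\mathbb R^n)$ with $supp(\bE)\subseteq K$. My strategy is therefore to show that \emph{one and the same} $\bE$ serves both purposes: for a fixed $\bE=(\sigma_{ij})\in M^+(\mathbb R^n)$ with $supp(\bE)\subseteq K$, I will prove that the entrywise moment identities $\bS_\alpha = \int x^\alpha\, d\bE(x)$, holding for all $\alpha\in\mathbb N_0^n$, are equivalent to the single functional identity $\mathcal{L}_\bS(\bF)=\int \tr(\bF(x)\,d\bE(x))$, holding for all $\bF\in M_t(\mathbb R[X])$. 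Once this is established, the proposition follows at once, since a representing measure witnessing one side then witnesses the other.

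For the forward direction I would fix $\bE$ with $(\bS_\alpha)_{ij}=\int x^\alpha\,d\sigma_{ij}(x)$ for all $i,j$ and all $\alpha$. Writing $\bF(X)=\sum_{|\alpha|=0}^{d}\bF_\alpha X^\alpha$ and unwinding the definition of the matrix integral, I expand
\begin{equation*}
\int \tr(\bF(x)\,d\bE(x))=\sum_{i,j}\int (\bF(x))_{ij}\,d\sigma_{ji}(x)=\sum_{|\alpha|=0}^{d}\sum_{i,j}(\bF_\alpha)_{ij}\int x^\alpha\,d\sigma_{ji}(x).
\end{equation*}
Substituting $\int x^\alpha\,d\sigma_{ji}(x)=(\bS_\alpha)_{ji}$ and using the bookkeeping identity $\sum_{i,j}(\bF_\alpha)_{ij}(\bS_\alpha)_{ji}=\tr(\bF_\alpha\bS_\alpha)$, the double sum collapses to $\sum_{|\alpha|=0}^{d}\tr(\bF_\alpha\bS_\alpha)=\mathcal{L}_\bS(\bF)$. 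The only point needing a word of justification is the interchange of the finite sums with the integral, which is legitimate because $\bF$ is a polynomial, hence a finite sum, and every monomial is integrable against the finite measures $\sigma_{ji}$.

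For the converse I would fix $\bE$ satisfying the functional identity and recover the moments by testing against matrix units. For each pair $(i,j)$ and each $\alpha$, take $\bF(X)=E_{ij}X^\alpha$, where $E_{ij}$ denotes the elementary matrix with a single $1$ in position $(i,j)$. Then $\mathcal{L}_\bS(\bF)=\tr(E_{ij}\bS_\alpha)=(\bS_\alpha)_{ji}$, while the same expansion as above gives $\int \tr(\bF(x)\,d\bE(x))=\int x^\alpha\,d\sigma_{ji}(x)=\bigl(\int x^\alpha\,d\bE(x)\bigr)_{ji}$. Equating the two yields $(\bS_\alpha)_{ji}=\bigl(\int x^\alpha\,d\bE(x)\bigr)_{ji}$; as $i,j$ and $\alpha$ are arbitrary, this gives $\bS_\alpha=\int x^\alpha\,d\bE(x)$ for every $\alpha$, as required. (One may note in passing that $\int x^\alpha\,d\bE(x)$ is automatically symmetric, since $\bE\in M^+(\mathbb R^n)$ forces $\sigma_{ij}=\sigma_{ji}$, consistently with the standing symmetry of the $\bS_\alpha$.)

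I do not expect a serious obstacle here: the whole argument rests on the trace identity $\tr(\bF_\alpha\bS_\alpha)=\sum_{i,j}(\bF_\alpha)_{ij}(\bS_\alpha)_{ji}$ together with linearity of the matrix integral and of the trace. The one place to be careful is the precise reading of the notation $\int\tr(\bF(x)\,d\bE(x))$ --- namely that it equals $\sum_{i,j}\int(\bF(x))_{ij}\,d\sigma_{ji}(x)$ --- and the verification that the matrix-unit test polynomials $E_{ij}X^\alpha$ suffice to separate all entries of every $\bS_\alpha$, which they plainly do.
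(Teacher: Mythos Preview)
Your proposal is correct and follows essentially the same route as the paper: both directions rely on linearity of the trace and the matrix integral, and the converse is obtained by testing against the elementary matrix units $E_{ij}X^\alpha$ (the paper writes $\bW_{k,l}X^\alpha$) to extract individual entries of $\bS_\alpha$. Your write-up is somewhat more explicit at the entrywise level, but the argument is the same.
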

\begin{proof} Assume that $\bS$ is a matrix-valued $K$-moment sequence. Then there exists  a matrix-valued measure $\bE$ in $M^+(\mathbb R^n)$ supported in $K$ and 
$$\bS_\alpha = \int x^\alpha d\bE(x), ~\forall \alpha \in \mathbb N_0^n. $$
Then, for each $\bF(X)=\displaystyle\sum_{|\alpha|=0}^d \bF_\alpha X^\alpha\in M_t(\mathbb R[X])$, by definition of $\mathcal{L}_\bS$, we have
\begin{align*}
\mathcal{L}_\bS(\bF) &= \sum_{\alpha} \tr(\bF_\alpha \bS_\alpha)=\sum_\alpha \tr\Big(\int x^\alpha \bF_\alpha d\bE(x)\Big)\\
&=\int \tr\Big(\sum_\alpha \bF_\alpha x^\alpha d\bE(x)\Big) = \int \tr(\bF(x) d\bE(x)).
\end{align*}
This implies that $\mathcal{L}_\bS$ is a tracial $K$-moment functional.

Conversely, assume that $\bE$ is a matrix-valued measure in $M^+(\mathbb R^n)$ satisfying 
$$ supp(\bE) \subseteq K \mbox{ and } \mathcal{L}_\bS(\bF) = \int \tr(\bF(x)d \bE(x)), ~ \forall \bF \in M_t(\mathbb R[X]).$$
In particular, for each coordinate matrix $\bW_{k,l}$ of the algebra $M_t(\mathbb R)$ and each  $\alpha \in \mathbb N_0^n$, we have 
$$ \mathcal{L}_\bS(X^\alpha \bW_{k,l}) = \int x^\alpha \tr(\bW_{k,l}d \bE(x))=\int x^\alpha d\bE_{l,k}.  $$
Here we use the fact that for each $k,l=1,\ldots,t$ and for each matrix $\bA = (\bA_{i,j})_{i,j}\in M_t(\mathbb R)$, we have 
$$ \tr(\bW_{k,l}\bA) = \bA_{l,k}. $$
On the other hand, by definition of the associated Riesz functional, we have 
$$ \mathcal{L}_\bS(X^\alpha \bW_{k,l}) = \tr(\bW_{k,l}\bS_\alpha) = (\bS_{\alpha})_{l,k}.$$
It follows that 
$$
\bS_\alpha^T = \big((\bS_\alpha)_{l,k}\big)_{k,l} = \Big(\int x^\alpha d\bE_{l,k}(x)\Big)_{k,l} = \Big(\int x^\alpha d\bE(x)\Big)^T.
$$
Equivalently, $\bS_\alpha = \displaystyle\int x^\alpha d\bE(x)$, i.e. $\bS$ is a matrix-valued $K$-moment sequence.
\end{proof}

\subsection{Haviland's theorem for matrix polynomials}
\begin{theorem} \label{thr-Haviland}
Let $\mathcal{L}$ be a real-valued linear functional on $M_t(\mathbb R[X])$ and $K\subseteq \mathbb R^n$ a closed set. Then the following are equivalent.
\begin{itemize}
\item[(1)] $\mathcal{L}$ is a tracial $K$-moment functional, i.e. there exists a positive semi-definite matrix-valued measure $\bE$ on $\mathbb R^n$ whose support is contained in $K$ such that 
$$\mathcal{L}(\bF) = \int \tr(\bF d\bE). $$
\item[(2)] $\mathcal{L}(\bF) \geq 0$ for all $\bF\geq 0$ on $K$.
\item[(3)] $\mathcal{L}(\bF+\epsilon \bI) \geq 0$ for all $\bF\geq 0$ on $K$ and for all $\epsilon >0$.
\item[(4)] $\mathcal{L}(\bF) \geq 0$ for all $\bF > 0$ on $K$.
\end{itemize}
\end{theorem}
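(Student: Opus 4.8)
The plan is to prove the cycle of implications, treating (1)$\Rightarrow$(2) together with the equivalences among the three positivity conditions (2), (3), (4) as the routine part, and isolating (2)$\Rightarrow$(1) as the substantial one. For (1)$\Rightarrow$(2), suppose $\mathcal{L}(\bF)=\int\tr(\bF\,d\bE)$ for a positive semidefinite matrix measure $\bE=(\sigma_{ij})$ supported in $K$. Writing $\tau:=\sum_i\sigma_{ii}$ for the (positive, finite) trace measure, positive semidefiniteness gives $|\sigma_{ij}(A)|^2\leq\sigma_{ii}(A)\sigma_{jj}(A)\leq\tau(A)^2$, so each $\sigma_{ij}$ is absolutely continuous with respect to $\tau$ and $d\bE=Q\,d\tau$, where the matrix Radon--Nikodym derivative $Q=(d\sigma_{ij}/d\tau)$ is positive semidefinite for $\tau$-almost every $x$. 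Hence, for $\bF\geq0$ on $K$,
\[
\mathcal{L}(\bF)=\int_K\tr\big(\bF(x)Q(x)\big)\,d\tau(x)\geq0,
\]
since $\tr(AB)=\tr(A^{1/2}BA^{1/2})\geq0$ whenever $A,B$ are positive semidefinite.

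The equivalences among (2), (3), (4) are elementary limiting arguments using only linearity of $\mathcal{L}$. Here (2)$\Rightarrow$(3) is immediate because $\bF\geq0$ on $K$ forces $\bF+\epsilon\bI\geq0$ on $K$, while (3)$\Rightarrow$(2) follows by letting $\epsilon\to0^+$ in $\mathcal{L}(\bF)+\epsilon\mathcal{L}(\bI)=\mathcal{L}(\bF+\epsilon\bI)\geq0$. Likewise (2)$\Rightarrow$(4) is trivial since $\bF>0$ on $K$ implies $\bF\geq0$ on $K$; and for (4)$\Rightarrow$(2) one notes that $\bF\geq0$ on $K$ yields $\bF+\epsilon\bI>0$ on $K$ for every $\epsilon>0$, whence $\mathcal{L}(\bF)+\epsilon\mathcal{L}(\bI)\geq0$ and $\epsilon\to0^+$ again finishes.

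The heart of the matter is (2)$\Rightarrow$(1), the existence of a representing measure. First I would record a symmetry forced by (2): any antisymmetric matrix polynomial $\bH$ satisfies $v^T\bH v=0$ for all $v$, so both $\bH\geq0$ and $-\bH\geq0$ on $K$, and (2) gives $\mathcal{L}(\bH)=0$; thus $\mathcal{L}(\bF)=\mathcal{L}(\bF^T)$, i.e. $\mathcal{L}$ depends only on the symmetric part of its argument. The strategy is then to \emph{scalarize}: introducing auxiliary variables $Y=(Y_1,\dots,Y_t)$, I would attach to each symmetric $\bF\in S_t(\mathbb R[X])$ the scalar polynomial $Y^T\bF(X)Y\in\mathbb R[X,Y]$ and set $\widetilde{\mathcal{L}}(Y^T\bF Y):=\mathcal{L}(\bF)$, which is well defined precisely because the quadratic form $Y^T\bF Y$ recovers the symmetric $\bF$ by polarization. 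Since $y^T\bF(x)y\geq0$ for all $y$ exactly when $\bF(x)\geq0$, condition (2) translates into positivity of $\widetilde{\mathcal{L}}$ on those forms $Y^T\bF Y$ that are nonnegative on $K\times S^{t-1}$. Applying the scalar Haviland theorem --- the step where I would invoke the construction of Cimpri\v{c} and Zalar \cite{CZ} --- produces a positive Borel measure $\nu$ on $K\times S^{t-1}$ with $\mathcal{L}(\bF)=\int y^T\bF(x)y\,d\nu(x,y)$. Finally I would define the matrix measure $\bE$ on $K$ by $\bE(A):=\int_{A\times S^{t-1}}yy^T\,d\nu(x,y)$; it is positive semidefinite because $v^T\bE(A)v=\int_{A\times S^{t-1}}(v^Ty)^2\,d\nu\geq0$, it is supported in $K$, and since $\tr(\bF(x)\,yy^T)=y^T\bF(x)y$ it satisfies $\int\tr(\bF\,d\bE)=\int y^T\bF(x)y\,d\nu=\mathcal{L}(\bF)$, which is (1).

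The main obstacle is exactly the passage through the scalar Haviland theorem: one must extend $\widetilde{\mathcal{L}}$ from the subspace of quadratic-in-$Y$ forms to a genuinely positive functional on enough of $\mathbb R[X,Y]$, and --- when $K$ is noncompact --- control growth so as to obtain a \emph{finite} representing measure supported in $K\times S^{t-1}$. This is the technical core carried out by the Riesz-type extension argument of \cite{CZ}. For the compact sets of interest here, namely the hypercubes $[-1,1]^n$, the relevant subspace contains an order unit --- the constant form $Y^T\bI Y$, which equals $1$ on $K\times S^{t-1}$ --- so the positive extension and the representation follow directly from the order-unit version of the Riesz extension theorem.
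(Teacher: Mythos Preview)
Your proof is correct and structurally aligned with the paper's: both treat the equivalences among (2), (3), (4) by the same elementary limiting arguments, and both ultimately rely on Cimpri\v{c}--Zalar \cite{CZ} for the substantial implication $(2)\Rightarrow(1)$. The difference is one of detail rather than strategy. The paper simply cites \cite[Theorem~3 and Remark~5]{CZ} for the full equivalence $(1)\Leftrightarrow(2)$ and moves on, whereas you supply an explicit Radon--Nikodym argument for $(1)\Rightarrow(2)$ and then sketch the scalarization mechanism (passing to $Y^T\bF Y$ on $K\times S^{t-1}$ and pushing the resulting scalar measure forward to a matrix measure via $yy^T$) that underlies \cite{CZ}. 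Your honest flagging of the extension step---from the subspace of quadratic-in-$Y$ forms to a positive functional on which scalar Haviland applies---is exactly where the real work in \cite{CZ} sits, and your remark that the order-unit structure handles this cleanly for compact $K$ such as $[-1,1]^n$ is apt for the paper's applications. In short: same approach, but you unpack what the paper black-boxes.
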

\begin{proof} The equivalence of (1) and (2) was proved by Cimprič and Zalar \cite[Theorem 3 and Remark 5]{CZ}.\\
$(2)\rightarrow (3)$ is obvious, since $\mathcal{L}(\bI)\geq 0$ by hypothesis.  Now we check
$(3)\rightarrow (2)$. Take $\bF \geq 0$ on $K$. Consider  $\epsilon = \dfrac{1}{n}$ for $n\in \mathbb N$. Then by the assumption of (3),
$$ 0\leq \mathcal{L}(\bF + \dfrac{1}{n}\bI)=\mathcal{L}(\bF) + \dfrac{1}{n}\mathcal{L}(\bI), $$
where $\mathcal{L}(\bI) \geq 0$ by hypothesis. Letting $n\rightarrow +\infty$, we get $\mathcal{L}(\bF) \geq 0$, i.e. we have (2).\\
It is obvious that $(2)\rightarrow (4)$. Now we verify $(4)\rightarrow (3)$. Take $\bF\geq 0$ on $K$ and $\epsilon >0$. Then $\bF + \epsilon \bI >0$ on $K$. Hence by the assumption we have $\mathcal{L}(\bF + \epsilon \bI) \geq 0$, i.e. we have (3). The proof is complete.
\end{proof}

\section{Tracial  moment problems on the intervals $[-1,1]$ and $[0,1]$}

Firstly we propose a version of the Bernstein theorem (cf. \cite[Prop. 3.4]{Schm2}) for matrix polynomials, representing a matrix polynomial positive definite on the interval $[-1,1]$. This is a special case of the Handelman's Positivstellensatz for matrix polynomials established in \cite{LeB}, but the proof given here is   special for the case of  one-variable matrix polynomials.

\begin{theorem} \label{thr-Bernstein1}Let $\bF \in M_t(\mathbb R[x])$ be a matrix polynomial of degree $d>0$. Assume $\bF (x) >0$ for all $x\in [-1,1]$. Then there exists a positive number $N\in \mathbb N$ and positive definite matrices $\bG_i, i=1,\ldots,N+d$ such that 
$$\bF(x) = \sum_{i=0}^{N+d} \bG_i (1+x)^i (1-x)^{N+d-i}.$$
\end{theorem}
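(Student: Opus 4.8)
The plan is to mimic the scalar Bernstein representation for polynomials positive on $[-1,1]$ but execute the argument through the ordered vector space structure of symmetric matrices. The scalar statement says that a polynomial $p(x)>0$ on $[-1,1]$ can be written as a nonnegative combination of the Bernstein basis polynomials $(1+x)^i(1-x)^{N+d-i}$ for $N$ large; I would try to run the same proof with matrix coefficients, showing the coefficient matrices $\bG_i$ come out positive definite. First I would make the change of variables $x\mapsto \frac{1-y}{1+y}$ (or equivalently pass to homogeneous coordinates), which turns the interval $[-1,1]$ into the ray $[0,+\infty)$ and converts the Bernstein problem into a statement about the coefficients of $(1+x)^{N+d}\,\bF\!\left(\frac{1-x}{1+x}\right)$ being positive definite for large $N$.

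Concretely, I would set $\bH(x):=(1-x)^d\,\bF\!\left(\frac{1+x}{1-x}\right)$, a matrix polynomial of degree $d$ whose positive definiteness on $[-1,1]$ translates (after the substitution) into $\bH$ being positive definite on $[0,+\infty)$, or I would work directly with the reparametrized variable to reduce to proving that, for $N\gg 0$, the polynomial $(1+x)^{N}\bF(x)$ expanded in the Bernstein basis of degree $N+d$ has all its matrix coefficients positive definite. The key computation is to write
\begin{equation*}
\bF(x)=\sum_{i=0}^{N+d}\bG_i\,(1+x)^i(1-x)^{N+d-i},
\end{equation*}
and extract each $\bG_i$ via the known linear inversion of the Bernstein basis; each $\bG_i$ is then an explicit real-linear combination of the matrix coefficients $\bF_0,\ldots,\bF_d$ with coefficients depending on $N$, $d$, $i$. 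I would then estimate $v^T\bG_i v$ for an arbitrary unit vector $v\in\mathbb R^t$: because $v^T\bF(x)v$ is a scalar polynomial strictly positive on $[-1,1]$ (by positive definiteness of $\bF$ together with compactness, so it is bounded below by a positive constant uniformly in $v$ on the unit sphere), the scalar Bernstein theorem applied to $v^T\bF v$ gives that the corresponding scalar coefficients $v^T\bG_i v$ are strictly positive once $N$ exceeds a threshold $N(v)$.

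The main obstacle, and the step requiring the most care, is passing from the pointwise-in-$v$ scalar Bernstein result to a single $N$ that works for all $v$ simultaneously and yields genuinely positive \emph{definite} $\bG_i$. To handle this I would use compactness of the unit sphere in $\mathbb R^t$ together with uniform bounds: since $\bF>0$ on the compact set $[-1,1]$, the quantity $\min_{\|v\|=1,\,x\in[-1,1]} v^T\bF(x)v$ is a positive constant $c>0$, and the coefficients of $v^T\bF(x)v$ in the monomial basis are uniformly bounded in $v$. The explicit form of the scalar Bernstein bound — which gives a lower estimate on the Bernstein coefficients depending only on the uniform lower bound $c$ and on uniform bounds for the coefficients of the polynomial — shows that there is a threshold $N$ depending only on $c$, $d$, and these uniform coefficient bounds, hence independent of $v$. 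For this fixed $N$, each $v^T\bG_i v>0$ for all unit $v$, which is exactly $\bG_i>0$. I expect the technical heart to be making the scalar Bernstein estimate sufficiently quantitative and uniform; once that uniformity is in hand, the matrix statement follows by the ordered-vector-space principle that a symmetric matrix is positive definite precisely when all its quadratic forms $v^T(\cdot)v$ are positive.
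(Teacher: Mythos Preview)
Your plan is sound and would yield a correct proof, and it shares with the paper the opening move of a Goursat-type substitution sending $[-1,1]$ to $[0,+\infty)$; but the decisive step is handled differently. After forming $\tilde{\bF}(t)=(1+t)^d\bF\bigl(\tfrac{1-t}{1+t}\bigr)$ and checking it is positive definite on $[0,+\infty)$ with positive definite leading coefficient, the paper homogenizes and invokes the matrix P\'olya theorem of Scherer and Hol on the simplex as a black box to get $(1+t)^N\tilde{\bF}(t)=\sum_i \bH_i t^i$ with each $\bH_i>0$, then substitutes back. You instead linearize through quadratic forms: since the Bernstein coefficients $\bG_i$ depend linearly on $\bF$, one has $v^T\bG_i v$ equal to the scalar Bernstein coefficients of $v^T\bF v$, and a uniform (in $v$) choice of $N$ follows from compactness of the unit sphere together with a quantitative scalar Bernstein/P\'olya bound that depends only on the degree, a lower bound for $v^T\bF v$ on $[-1,1]$, and a coefficient bound. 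Your route is more self-contained---it needs only the scalar theorem in effective form and is in fact essentially how one proves the matrix P\'olya theorem---whereas the paper's route is shorter because it outsources exactly this uniformity to Scherer--Hol. One cosmetic slip: your displayed formula for $\bH(x)$ reverses the substitution you announce in the sentence before it; with the correct transform (the one the paper uses) your argument goes through unchanged.
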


The main idea of the proof is  learned from the proof \cite[Prop. 3.4]{Schm2} for polynomials, using  the \textit{Goursat transform} and the P\'olya theorem for homogeneous matrix polynomials established  in \cite{SchH}.

Let $\bF$ be as in Theorem \ref{thr-Bernstein1}. The \textit{Goursat transform} of $\bF$ is defined by 
$$ \tilde{\bF}(x):= (1+x)^d \bF(\dfrac{1-x}{1+x}).$$ 
\begin{lemma}\label{lm-Bernstein1}
$\deg(\tilde{\bF}) = d$, $\tilde{\bF}_d >0$, and $\tilde{\bF}(t) >0$ for all $t\in [0,+\infty)$, where $\tilde{\bF}_d$ denotes the leading  coefficient of the matrix polynomial $\tilde{\bF}$.
\end{lemma}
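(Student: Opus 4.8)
The plan is to expand the Goursat transform as an explicit matrix polynomial in $x$, read off its leading coefficient in terms of a value of $\bF$ on $[-1,1]$, and then use the fact that the Möbius substitution $x\mapsto \frac{1-x}{1+x}$ carries $[0,+\infty)$ into $[-1,1]$.

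First I would write $\bF(y)=\sum_{i=0}^d \bF_i y^i$ (so $\bF_d\neq 0$ since $\deg \bF=d$) and substitute $y=\frac{1-x}{1+x}$ into the definition, clearing the denominator against the factor $(1+x)^d$:
$$ \tilde{\bF}(x)=(1+x)^d\sum_{i=0}^d \bF_i\Big(\frac{1-x}{1+x}\Big)^i=\sum_{i=0}^d \bF_i\,(1-x)^i(1+x)^{d-i}. $$
This exhibits $\tilde{\bF}$ as a genuine matrix polynomial of degree at most $d$. To compute the coefficient of $x^d$, I observe that the top-degree term of $(1-x)^i(1+x)^{d-i}$ is $(-1)^i x^d$, so summing the contributions gives
$$ \tilde{\bF}_d=\sum_{i=0}^d (-1)^i\bF_i=\bF(-1). $$

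This identity settles (1) and (2) at once: since $-1\in[-1,1]$, the hypothesis $\bF>0$ on $[-1,1]$ yields $\tilde{\bF}_d=\bF(-1)>0$, which is in particular nonzero, so $\deg\tilde{\bF}=d$. For (3) I would verify that the substitution sends $[0,+\infty)$ into $[-1,1]$: the function $x\mapsto \frac{1-x}{1+x}$ is strictly decreasing on $[0,+\infty)$ (its derivative is $-2/(1+x)^2<0$) with value $1$ at $x=0$ and limit $-1$ as $x\to+\infty$, so its image is $(-1,1]\subseteq[-1,1]$. Hence $\bF\big(\frac{1-x}{1+x}\big)>0$ for every $x\geq 0$, and since $(1+x)^d$ is a positive scalar, $\tilde{\bF}(x)=(1+x)^d\,\bF\big(\frac{1-x}{1+x}\big)>0$ on all of $[0,+\infty)$.

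None of these steps is a serious obstacle; the only points requiring care are the correct bookkeeping of the leading coefficient (making sure it equals $\bF(-1)$ rather than $\bF(1)$, which comes from the sign $(-1)^i$) and the observation that the endpoint $-1$ is merely approached as $x\to+\infty$ and need not be attained — the closedness of $[-1,1]$ together with strict positivity makes this irrelevant to the conclusion.
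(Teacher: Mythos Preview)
Your proof is correct and follows essentially the same route as the paper's: expand $\tilde{\bF}(x)=\sum_{i=0}^d \bF_i(1-x)^i(1+x)^{d-i}$, read off the leading coefficient as $\bF(-1)>0$, and use that $t\mapsto\frac{1-t}{1+t}$ sends $[0,+\infty)$ into $(-1,1]$. The only difference is that you spell out the monotonicity of the M\"obius map explicitly, whereas the paper simply asserts the range.
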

\begin{proof} Let $\bF(x)=\sum_{i=0}^d \bF_i x^i$, with $\bF_d\not \equiv 0$. Then we can write $\tilde{\bF}$ as 
$$ \tilde{\bF}(x)=\sum_{i=0}^d \bF_i (1-x)^i (1+x)^{d-i}.$$
The leading coefficient of   $\tilde{\bF}$ is
$$ \tilde{\bF}_d:=\sum_{i=0}^d (-1)^i \bF_i = \bF(-1) $$
which is positive definite by the assumption of $\bF$. 

For every $t\in [0,+\infty)$, $x:=\dfrac{1-t}{1+t}\in (-1,1]$. Observe that $t=\dfrac{1-x}{1+x}$, and hence
$$ \tilde{\bF}(t)=(1+t)^d \bF(x).$$
It follows from the assumption of $\bF$ that $\tilde{\bF}(t) >0$.
\end{proof}

\begin{lemma} \label{lm-Bernstein2} Let $\bG \in M_t(\mathbb R[x])$ be a matrix polynomial of degree $d$ with a positive definite leading coefficient $\bG_d$. If $\bG(x) >0$ for all $x\in [0,+\infty)$ then there exists a positive integer $N$ and positive definite matrices $\bH_i$, $i=0,\ldots,N+d$ such that 
$$ (1+x)^N\bG(x) = \sum_{i=0}^{N+d} \bH_i x^i.$$
\end{lemma}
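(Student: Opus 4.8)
The plan is to reduce Lemma \ref{lm-Bernstein2} to the P\'olya theorem for homogeneous matrix polynomials. The obstruction to applying P\'olya directly is that $\bG$ is only positive \emph{on the ray} $[0,+\infty)$, not on a compact simplex, and homogenization needs strict positivity throughout a closed region after projectivizing. So first I would homogenize: introduce a second variable $y$ and set
$$ \bG^h(x,y):= y^d\, \bG\!\left(\frac{x}{y}\right), $$
which is a homogeneous matrix polynomial of degree $d$ in $(x,y)$. For $y>0$ and $x\geq 0$ this equals $y^d\bG(x/y)>0$ by hypothesis. The worry is the boundary $y=0$: there $\bG^h(x,0)=\bG_d\,x^d$, which is positive definite for $x>0$ precisely because $\bG_d>0$, exactly the hypothesis supplied on the leading coefficient. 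Thus $\bG^h(x,y)>0$ for all $(x,y)$ in the closed first quadrant with $(x,y)\neq(0,0)$, and by homogeneity it is positive definite on the compact arc $\{(x,y): x,y\geq 0,\ x+y=1\}$.

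Next I would invoke the P\'olya theorem for homogeneous matrix polynomials from \cite{SchH}: since $\bG^h$ is homogeneous of degree $d$ and positive definite on the simplex $\{x,y\geq 0,\ x+y=1\}$, there is a positive integer $N$ so that $(x+y)^N\bG^h(x,y)$ has all its coefficients positive definite matrices. Writing this out, $(x+y)^N\bG^h(x,y)$ is homogeneous of degree $N+d$, so
$$ (x+y)^N \bG^h(x,y)=\sum_{i=0}^{N+d}\bH_i\, x^i y^{N+d-i} $$
with each $\bH_i>0$. The final step is dehomogenization: substituting $y=1$ gives $(x+1)^N\bG^h(x,1)=\sum_{i=0}^{N+d}\bH_i x^i$, and $\bG^h(x,1)=\bG(x)$ by construction, yielding exactly
$$ (1+x)^N\bG(x)=\sum_{i=0}^{N+d}\bH_i x^i $$
as required.

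The main obstacle is verifying the positivity hypothesis of P\'olya's theorem at the boundary point $y=0$ of the simplex, and this is precisely where the assumption $\bG_d>0$ (rather than merely $\bG_d\not\equiv 0$) is essential: without it, $\bG^h$ would only be positive on the open edge $y>0$, P\'olya would not apply, and indeed the conclusion could fail. I would also want to check the mild bookkeeping that $\bG^h$ is genuinely of degree $d$ and that the P\'olya statement in \cite{SchH} is quoted in the homogeneous two-variable form I need (as opposed to a general $n$-variable simplex form, which specializes correctly). Everything else—homogenization, the binomial identity, and the substitution $y=1$—is routine.
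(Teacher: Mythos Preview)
Your proposal is correct and follows essentially the same route as the paper's own proof: homogenize $\bG$ to $\bG^h(x,y)$, use the hypothesis $\bG_d>0$ to secure strict positivity at the boundary point $y=0$ of the simplex $\{x,y\geq 0,\ x+y=1\}$, invoke the matrix P\'olya theorem of \cite{SchH} to obtain $(x+y)^N\bG^h(x,y)$ with positive definite coefficients, and then dehomogenize by setting $y=1$. Your identification of the boundary check as the crux, and of $\bG_d>0$ as the hypothesis that makes it go through, matches the paper exactly.
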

\begin{proof} Assume $\bG(x)=\sum_{i=0}^d \bG_i x^i$. Denote by $\Delta$ the standard simplex in $\mathbb R^2$
$$\Delta = \{(x,y)\in \mathbb R^2 | x\geq 0, y\geq 0, x+y+1\}. $$
Let $ {\bG}^h(x,y) \in M_t(\mathbb R[x,y])$ be the \textit{ homogenization} of $\bG$ with respect to the new variable $y$, defined by
$$ \bG^h (x,y):=\sum_{i=0}^d \bG_i x^i y^{d-i}.$$
For any $(x,y)\in \Delta$,\\
if $y=0$, we have $x=1$, then
$$\bG^h(1,0)=\bG_d >0. $$
If $y>0$, we have 
$$\bG^h(x,y)=y^d\bG(\dfrac{x}{y}) >0, $$
since $\dfrac{x}{y} \in [0,+\infty)$. In summary, we have $\bG^h(x,y) >0$ for all $(x,y)\in \Delta$.

It follows from the P\'olya theorem for homogeneous matrix polynomials proved by Scherer and Hol \cite[Theorem 3]{SchH} that there exist a positive integer $N$ and positive definite matrices $\bG_{k,l},$ with $k,l\in \mathbb N_0$, $k+l=0,\ldots,N+d$ such that 
\begin{equation}\label{equ-polya1}
(x+y)^N \bG^h(x,y)= \sum_{k+l=N+d} \bG_{k,l} x^ky^l.
\end{equation} 
Substituting $y=1$ in both sides of the expression (\ref{equ-polya1}), observing that $\bG^h(x,1)=\bG(x)$, we get
$$ (1+x)^N \bG(x)= \sum_{k+l=N+d} \bG_{k,l} x^k = \sum_{k=0}^{N+d}G_{k,N+d-k}x^k.$$
For each $k=0,\ldots,N+d$, the matrices $\bH_k:=\bG_{k,N+d-k}$ satisfy conclusion of the lemma.
\end{proof}
Now we are ready to prove Theorem \ref{thr-Bernstein1}.
\begin{proof}[\textbf{Proof of Theorem \ref{thr-Bernstein1}}] Let $\bF$ satisfy the hypothesis of the theorem. It follows from Lemma \ref{lm-Bernstein1} that the Goursat transform $\tilde{\bF}$ satisfy the following properties
\begin{itemize}
\item $\deg(\tilde{\bF})=d$;
\item $\tilde{\bF}_d >0$;
\item $\tilde{\bF}(t) >0$ for all $t\in [0,+\infty)$.
\end{itemize}
Ir follows from Lemma \ref{lm-Bernstein2} that there exist a positive integer $N$ and positive definite matrices $\bH_{i}, i=0,\ldots,N+d$ such that 
\begin{equation}\label{equ-polya2}
(1+t)^{N}\tilde{\bF}(t) = \sum_{i=0}^{N+d} \bH_i t^i, ~ \forall t\in [0,+\infty). 
\end{equation}
For any $x\in (-1,1]$, there exists a unique $t\in [0,+\infty)$ such that $x=\dfrac{1-t}{1+t}$. Then 
$$t=\dfrac{1-x}{1+x}, ~(1+t)^{-1}=\dfrac{1+x}{2}, \mbox{ and } \tilde{\bF}(t)=(1+t)^d \bF(x). $$
Substituting into (\ref{equ-polya2}), we get
$$
(1+t)^N (1+t)^d \bF(x)  =  \sum_{i=0}^{N+d} \bH_i \big(\dfrac{1-x}{1+x}\big)^i.
$$
It follows that 
\begin{align*}
\bF(x) & = (1+t)^{-N+d} \sum_{i=0}^{N+d} \bH_i \big(\dfrac{1-x}{1+x}\big)^i \\
&=\dfrac{1}{2^{N+d}} (1+x)^{N+d} \sum_{i=0}^{N+d} \bH_i \big(\dfrac{1-x}{1+x}\big)^i\\
&=\sum_{i=0}^{N+d} \Big(\dfrac{1}{2^{N+d}}\bH_i\Big)(1-x)^i(1+x)^{N+d-i}.
\end{align*}
Hence the matrices $\bG_i:=\dfrac{1}{2^{N+d}}\bH_i$, $i=0,\ldots,N+d$, satisfy the conclusion of the theorem.
\end{proof}

It follows from the Haviland theorem for matrix polynomials (Theorem \ref{thr-Haviland}) and Theorem \ref{thr-Bernstein1} the following solution of the tracial $[-1,1]$-moment problem.

\begin{corollary}\label{coro-tracial-moment-on[-1,1]} Let $\mathcal{L}$ be a real-valued linear functional on $M_t(\mathbb R[x])$. Then the following are equivalent.
\begin{itemize}
\item[(1)] $\mathcal{L}$ is a tracial $[-1,1]$-moment functional.
\item[(2)] $\mathcal{L}((1+x)^k(1-x)^l\bG)\geq 0$ for all positive definite matrices $\bG \in M_t(\mathbb R)$ and $k,l\in \mathbb N_0$.
\end{itemize}
\end{corollary}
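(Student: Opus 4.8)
The plan is to deduce the corollary directly from the Haviland theorem for matrix polynomials (Theorem \ref{thr-Haviland}) together with the Bernstein-type representation (Theorem \ref{thr-Bernstein1}), handling the two implications separately. Both directions are essentially formal once those two results are in place, so the work is in correctly matching the building blocks $(1+x)^k(1-x)^l\bG$ to the hypotheses of each cited theorem.

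For $(1)\Rightarrow(2)$ I would invoke the implication $(1)\Rightarrow(2)$ of Theorem \ref{thr-Haviland}: if $\mathcal{L}$ is a tracial $[-1,1]$-moment functional, then $\mathcal{L}(\bF)\geq 0$ for every $\bF\geq 0$ on $[-1,1]$. It then suffices to observe that each building block $(1+x)^k(1-x)^l\bG$ with $\bG$ positive definite is positive semidefinite on $[-1,1]$: for $x\in[-1,1]$ the scalar $(1+x)^k(1-x)^l$ is nonnegative and $\bG\geq 0$, so $v^T\big((1+x)^k(1-x)^l\bG\big)v = (1+x)^k(1-x)^l\,v^T\bG v\geq 0$ for every $v\in\mathbb R^t$. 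Applying $\mathcal{L}$ yields (2).

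For $(2)\Rightarrow(1)$ I would use the implication $(4)\Rightarrow(1)$ of Theorem \ref{thr-Haviland}, so it is enough to prove $\mathcal{L}(\bF)\geq 0$ for every matrix polynomial $\bF$ with $\bF>0$ on $[-1,1]$. Given such an $\bF$ of degree $d>0$, Theorem \ref{thr-Bernstein1} provides a positive integer $N$ and positive definite matrices $\bG_0,\ldots,\bG_{N+d}$ with
$$\bF(x)=\sum_{i=0}^{N+d}\bG_i(1+x)^i(1-x)^{N+d-i}.$$
By linearity of $\mathcal{L}$,
$$\mathcal{L}(\bF)=\sum_{i=0}^{N+d}\mathcal{L}\big((1+x)^i(1-x)^{N+d-i}\bG_i\big),$$
and each summand is nonnegative by hypothesis (2) applied with $k=i$, $l=N+d-i$; hence $\mathcal{L}(\bF)\geq 0$.

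The only gap to close is that Theorem \ref{thr-Bernstein1} is stated for degree $d>0$, so the constant case must be treated by hand: if $\bF\equiv\bG$ is a positive definite constant matrix, then $\mathcal{L}(\bF)=\mathcal{L}\big((1+x)^0(1-x)^0\bG\big)\geq 0$ directly from (2) with $k=l=0$. I expect this degree-zero bookkeeping to be the only subtlety; once both cases are covered we have $\mathcal{L}(\bF)\geq 0$ for all $\bF>0$ on $[-1,1]$, and the implication $(4)\Rightarrow(1)$ of Theorem \ref{thr-Haviland} then gives that $\mathcal{L}$ is a tracial $[-1,1]$-moment functional, completing the proof.
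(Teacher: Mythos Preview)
Your proposal is correct and follows essentially the same route as the paper: both directions are obtained by combining Theorem \ref{thr-Haviland} (the implications $(1)\Rightarrow(2)$ and $(4)\Rightarrow(1)$) with the Bernstein-type representation of Theorem \ref{thr-Bernstein1}. Your explicit handling of the degree-zero case is a small refinement the paper omits.
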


\begin{proof} If (1) holds, for each $\bG$ positive definite and $k,l \in \mathbb N_0$, since $(1+x)^k(1-x)^l\bG \geq 0  $ on   $[-1,1]$, it follows from Theorem   \ref{thr-Haviland}, $(1)\Rightarrow (2)$, that $\mathcal{L}((1+x)^k(1-x)^l\bG)\geq 0$. Hence we have $(1)\rightarrow (2)$. 

Conversely, assume (2) holds. Then for each $\bF \in S_t(\mathbb R[x])$ of degree $d$, $\bF> 0$ on $[-1,1]$, it follows from Theorem \ref{thr-Bernstein1} that 
$$ \bF(x)= \sum_{i=0}^{N+d} \bG_i (1+x)^i (1-x)^{N+d-i}$$
for some $N>0$ and $\bG_i >0$ for all $i=0,\ldots, N+d$.  Then 
$$\mathcal{L}(\bF) = \sum_{i=0}^{N+d} \mathcal{L}(\bG_i (1+x)^i (1-x)^{N+d-i}) \geq 0. $$
It follows from Theorem \ref{thr-Haviland}, $(4)\Rightarrow (1)$, that $\mathcal{L}$ is a tracial $[-1,1]$-moment functional, i.e. $(2) \rightarrow (1)$. The proof is complete.
\end{proof}

As a consequence of this result, we obtain the following  solution of the tracial $[0,1]$-moment problem, which was also solved by Cimprič and Zalar \cite[Coro. 1]{CZ}, using sums of Hermitian squares.
\begin{corollary}\label{coro-tracial-moment-on[0,1]} Let $\mathcal{L}$ be a real-valued linear functional on $M_t(\mathbb R[x])$. Then the following are equivalent.
\begin{itemize}
\item[(1)] $\mathcal{L}$ is a tracial $[0,1]$-moment functional.
\item[(2)] $\mathcal{L}(x^k(1-x)^l\bG)\geq 0$ for all positive definite matrices $\bG \in M_t(\mathbb R)$ and $k,l\in \mathbb N_0$.
\end{itemize}
\end{corollary}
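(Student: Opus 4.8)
The plan is to reduce the statement to the already-proved $[-1,1]$ case (Corollary \ref{coro-tracial-moment-on[-1,1]}) through the affine bijection $\psi\colon [0,1]\to[-1,1]$, $\psi(x)=2x-1$. First I would introduce the companion functional $\mathcal{M}$ on $M_t(\mathbb R[x])$ defined by $\mathcal{M}(\bG):=\mathcal{L}(\bG(2x-1))$; that is, $\mathcal{M}$ is the precomposition of $\mathcal{L}$ with the algebra automorphism of $M_t(\mathbb R[x])$ induced by the substitution $x\mapsto 2x-1$. Since this substitution is a linear change of the indeterminate and $\mathcal{L}$ is linear, $\mathcal{M}$ is again a real-valued linear functional on $M_t(\mathbb R[x])$.

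The first key step is to show that $\mathcal{L}$ is a tracial $[0,1]$-moment functional if and only if $\mathcal{M}$ is a tracial $[-1,1]$-moment functional. Here I would use that $\psi$ is an affine homeomorphism carrying $[0,1]$ onto $[-1,1]$: given a representing measure $\bE\in M^+(\mathbb R^n)$ for $\mathcal{L}$ with $supp(\bE)\subseteq[0,1]$, its entrywise pushforward $\bE':=\psi_*\bE$ is again positive semidefinite with $supp(\bE')\subseteq[-1,1]$, and the change-of-variables formula for the matrix-valued integral yields $\int\tr(\bG(y)\,d\bE'(y))=\int\tr(\bG(2x-1)\,d\bE(x))=\mathcal{M}(\bG)$. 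The reverse direction uses $\psi^{-1}$. Thus representing measures correspond under pushforward, which gives the claimed equivalence.

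Next I would apply Corollary \ref{coro-tracial-moment-on[-1,1]} to $\mathcal{M}$: the functional $\mathcal{M}$ is a tracial $[-1,1]$-moment functional precisely when $\mathcal{M}\big((1+x)^k(1-x)^l\bG\big)\geq 0$ for every positive definite $\bG\in M_t(\mathbb R)$ and all $k,l\in\mathbb N_0$. Substituting the definition of $\mathcal{M}$ and using $1+(2x-1)=2x$ and $1-(2x-1)=2(1-x)$, a one-line computation gives $\mathcal{M}\big((1+x)^k(1-x)^l\bG\big)=2^{k+l}\,\mathcal{L}\big(x^k(1-x)^l\bG\big)$; since $2^{k+l}>0$ and positive definiteness of the constant matrix $\bG$ is untouched by the substitution, this nonnegativity is equivalent to condition (2) of the present corollary. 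Chaining the three equivalences proves the result.

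The only step requiring genuine care is the measure-transfer in the second paragraph: one must check that the entrywise pushforward of a positive semidefinite matrix-valued measure under the affine bijection is again positive semidefinite, that supports map as expected, and that the scalar change-of-variables formula applies entry by entry to the matrix-valued integral. These are routine consequences of the definitions in Section 2.1, but they constitute the main technical content, everything else being the algebraic identity of the third paragraph.
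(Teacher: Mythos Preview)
Your proof is correct, and it uses the same affine change of variable as the paper, but the reduction is organized differently. The paper pulls the bijection through Theorem~\ref{thr-Bernstein1} to obtain directly a Bernstein-type representation on $[0,1]$: any symmetric $\bF>0$ on $[0,1]$ is a finite positive-definite combination of $x^i(1-x)^{N+d-i}$; it then feeds this representation into Haviland's theorem (Theorem~\ref{thr-Haviland}) exactly as in the proof of Corollary~\ref{coro-tracial-moment-on[-1,1]}. You instead leave the Positivstellensatz untouched and transport the moment functional itself: define $\mathcal{M}(\bG)=\mathcal{L}(\bG(2x-1))$, match representing measures under the entrywise pushforward $\psi_*$, and invoke Corollary~\ref{coro-tracial-moment-on[-1,1]} as a black box. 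Your route is slightly more self-contained at the moment-problem level and makes the measure correspondence explicit; the paper's route has the small bonus of recording an independent Positivstellensatz on $[0,1]$ along the way. Either way the algebraic identity $(1+(2x-1))^k(1-(2x-1))^l=2^{k+l}x^k(1-x)^l$ is what collapses the $[-1,1]$ condition to the stated one, and your verification of it is correct.
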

\begin{proof}
Observe that there is a bijection from the interval $[-1,1]$ onto the interval $[0,1]$ given by 
$$x \mapsto \dfrac{x+1}{2}, \forall x \in [-1,1]. $$
Using this bijection, it follows from Theorem \ref{thr-Bernstein1} that for $\bF \in S_t(\mathbb R[x])$ of degree $d$, if $\bF > 0$ on $[0,1]$ then 
$$ \bF(x)=\sum_{i=0}^{N+d} \bG_i x^i (1-x)^{N+d-i}$$
for some $N>0$ and  $\bG_i >0$ for all $i=0,\ldots,N+d$. Then the result follows from this representation and Theorem \ref{thr-Haviland}.
\end{proof}

For the sequential matrix-valued $[-1,1]$-problem, we have the following solution.
\begin{corollary}\label{coro-tracial-sequence-on[-1,1]}
Let $\bS=(\bS_i)_{i\in \mathbb N_0}$ be a   sequence of symmetric matrices in $M_t(\mathbb R)$. Then $\bS$ is a matrix-valued  $[-1,1]$-moment sequence if and only if
$$ \sum_{i=0}^k\sum_{j=0}^l (-1)^j\binom{k}{i}\binom{l}{j} \tr(\bG\bS_{i+j}) \geq 0 $$
for all positive definite matrices $\bG\in M_t(\mathbb R)$ and $k,l\in \mathbb N_0$. 
\end{corollary}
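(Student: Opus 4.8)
The plan is to obtain this result by chaining together the two immediately preceding results and then performing a single explicit binomial expansion. First I would invoke Proposition \ref{prop-relation}, which says that $\bS$ is a matrix-valued $[-1,1]$-moment sequence if and only if its associated Riesz functional $\mathcal{L}_\bS$ is a tracial $[-1,1]$-moment functional. This reduces the sequence problem to a statement about the functional $\mathcal{L}_\bS$.

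Next I would apply Corollary \ref{coro-tracial-moment-on[-1,1]}, according to which $\mathcal{L}_\bS$ is a tracial $[-1,1]$-moment functional precisely when $\mathcal{L}_\bS\big((1+x)^k(1-x)^l\bG\big)\geq 0$ for every positive definite $\bG\in M_t(\mathbb R)$ and all $k,l\in\mathbb N_0$. At this point the entire content of the corollary is already secured, and what remains is purely to rewrite this family of test inequalities in the stated closed form.

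The computational core is to expand the test matrix polynomial and feed it into the definition of $\mathcal{L}_\bS$. Using $(1+x)^k=\sum_{i=0}^k\binom{k}{i}x^i$ and $(1-x)^l=\sum_{j=0}^l(-1)^j\binom{l}{j}x^j$, I would write
$$(1+x)^k(1-x)^l\bG=\sum_{i=0}^k\sum_{j=0}^l(-1)^j\binom{k}{i}\binom{l}{j}x^{i+j}\bG,$$
so that the coefficient matrix attached to the monomial $x^m$ is $\sum_{i+j=m}(-1)^j\binom{k}{i}\binom{l}{j}\bG$. Then applying the definition $\mathcal{L}_\bS(\bF)=\sum_m\tr(\bF_m\bS_m)$ and the linearity of the trace yields
$$\mathcal{L}_\bS\big((1+x)^k(1-x)^l\bG\big)=\sum_{i=0}^k\sum_{j=0}^l(-1)^j\binom{k}{i}\binom{l}{j}\tr(\bG\bS_{i+j}),$$
which is exactly the quantity appearing in the statement. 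Combining this identity with the equivalence furnished by Corollary \ref{coro-tracial-moment-on[-1,1]} immediately gives the claimed characterization.

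I expect no conceptual obstacle here, since the result is a direct corollary of the functional version; the statement is essentially a transcription. The only place demanding care is the bookkeeping in the double expansion: one must check that grouping the double sum by total degree $m=i+j$ to read off $\bF_m$, and then ungrouping after applying $\mathcal{L}_\bS$, reproduces the stated ranges $0\leq i\leq k$, $0\leq j\leq l$ with every sign and binomial factor intact, and that $\bS_{i+j}$ (rather than $\bS_i$ or $\bS_j$) is correctly attached. Because the pairing $\tr(\bG\,\cdot\,)$ is linear in its matrix argument, the non-commutativity of the matrices plays no role and no further subtlety arises.
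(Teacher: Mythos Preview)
Your proposal is correct and follows essentially the same approach as the paper: reduce to the functional case via Proposition~\ref{prop-relation}, apply Corollary~\ref{coro-tracial-moment-on[-1,1]}, and then expand $(1+x)^k(1-x)^l\bG$ by the binomial theorem to rewrite $\mathcal{L}_\bS\big((1+x)^k(1-x)^l\bG\big)$ in the stated closed form. The paper merely reverses the order of presentation, performing the expansion first and then invoking the two reductions.
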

\begin{proof}
Let $\mathcal{L}_\bS$ be the Riesz functional on $M_t(\mathbb R[x])$ associated to the sequence $\bS$.  For each $\bG\in S_t(\mathbb R[x])$, $\bG >0$, and $k,l \in \mathbb N_0$, we have 
\begin{align*}
\mathcal{L}_\bS \big((1+x)^k(1-x)^l\bG\big)&= \mathcal{L}_\bS\big(\sum_{i=0}^k \sum_{j=0}^l (-1)^j \binom{k}{i}\binom{l}{j}x^{i+j}\bG\big)\\
&=\sum_{i=0}^k \sum_{j=0}^l (-1)^j \binom{k}{i}\binom{l}{j}\mathcal{L}_\bS(x^{i+j}\bG)\\
&= \sum_{i=0}^k \sum_{j=0}^l (-1)^j \binom{k}{i}\binom{l}{j}\tr(\bG\bS_{i+j}).
\end{align*}
By Proposition \ref{prop-relation}, $\bS$ is a matrix-valued $[-1,1]$-moment sequence if and only if $\mathcal{L}_\bS$ is a tracial $[-1,1]$-moment functional. It follows from Corollary \ref{coro-tracial-moment-on[-1,1]} that this is equivalent to $\mathcal{L}_\bS \big((1+x)^k(1-x)^l\bG\big) \geq 0$ for all positive definite matrices $\bG\in M_t(\mathbb R)$ and $k,l\in \mathbb N_0$. This implies the result.
\end{proof}
Similarly, using Corollary \ref{coro-tracial-moment-on[0,1]}, we have the following solution of the sequential matrix-valued  $[0,1]$-moment problem.

\begin{corollary}\label{coro-tracial-sequence-on[0,1]}
Let $\bS=(\bS_i)_{i\in \mathbb N_0}$ be a   sequence of symmetric matrices in $M_t(\mathbb R)$. Then $\bS$ is a matrix-valued $[0,1]$-moment sequence if and only if 
$$ \sum_{i=0}^k (-1)^i\binom{k}{i}  \tr(\bG\bS_{i+l}) \geq 0 $$
for all positive definite matrices $\bG\in M_t(\mathbb R)$ and $k,l\in \mathbb N_0$.
\end{corollary}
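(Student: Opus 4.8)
The plan is to imitate the proof of Corollary~\ref{coro-tracial-sequence-on[-1,1]} almost verbatim, merely swapping the two preparatory results: I would invoke Corollary~\ref{coro-tracial-moment-on[0,1]} in place of Corollary~\ref{coro-tracial-moment-on[-1,1]}, so that the relevant generators become $x^l(1-x)^k$ rather than $(1+x)^k(1-x)^l$.

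First I would introduce the Riesz functional $\mathcal{L}_\bS$ on $M_t(\mathbb R[x])$ associated to $\bS$ and recall, from Proposition~\ref{prop-relation}, that $\bS$ is a matrix-valued $[0,1]$-moment sequence if and only if $\mathcal{L}_\bS$ is a tracial $[0,1]$-moment functional. By Corollary~\ref{coro-tracial-moment-on[0,1]}, the latter holds if and only if $\mathcal{L}_\bS\big(x^l(1-x)^k\bG\big)\ge 0$ for every positive definite $\bG\in M_t(\mathbb R)$ and all $k,l\in\mathbb N_0$. Here it is convenient to label the exponents so as to match the stated formula; since $k$ and $l$ range independently over all of $\mathbb N_0$, the choice of which factor carries which index is immaterial.

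The only computation is to expand this quantity. Using the binomial theorem $(1-x)^k=\sum_{i=0}^k(-1)^i\binom{k}{i}x^i$ together with the single-variable identity $\mathcal{L}_\bS(x^m\bG)=\tr(\bG\bS_m)$, which is immediate from the definition of $\mathcal{L}_\bS$ since the only nonzero coefficient of $x^m\bG$ is $\bG$ in degree $m$, I would obtain
$$\mathcal{L}_\bS\big(x^l(1-x)^k\bG\big)=\sum_{i=0}^k(-1)^i\binom{k}{i}\tr(\bG\bS_{i+l}),$$
which is exactly the expression appearing in the statement. Chaining the three equivalences then yields the result.

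There is no serious obstacle here: the argument is entirely parallel to the $[-1,1]$ case, and the substance is the routine binomial bookkeeping above. The only point requiring a moment's care is the indexing convention — checking that expanding $(1-x)^k$ (rather than $(1-x)^l$) produces precisely $\sum_{i=0}^k(-1)^i\binom{k}{i}\tr(\bG\bS_{i+l})$, and that letting $k,l$ range freely recovers positivity against all generators $x^a(1-x)^b$ governing $[0,1]$ in Corollary~\ref{coro-tracial-moment-on[0,1]}.
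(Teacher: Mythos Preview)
Your proposal is correct and follows exactly the approach the paper intends: the paper itself gives no separate proof for this corollary, merely writing ``Similarly, using Corollary~\ref{coro-tracial-moment-on[0,1]}, we have the following solution\ldots'', and your argument is precisely that similarity spelled out. The only cosmetic point is the swap of the exponents $k,l$ relative to Corollary~\ref{coro-tracial-moment-on[0,1]}, which you have already addressed and which is harmless since both indices range independently over $\mathbb N_0$.
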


\section{Tracial moment problems on the hypercube $[-1,1]^n$ }
In this section we consider a convex, compact polyhedron  $K\subseteq \mathbb R^n$   with non-empty interior. Assume that $K$ is the basic closed semi-algebraic set  defined by linear polynomials $L_1,\ldots, L_m \in \mathbb R[X]:=\mathbb R[X_1,\ldots,X_n]$, i.e.
$$ K=\{x\in \mathbb R^n | L_1(x)\geq 0, \ldots, L_m(x) \geq 0\}.$$
   The following version of Handelman's Positivstellensatz for matrix polynomials was proved by the the authors in \cite{LeB}. 
\begin{theorem}\label{thr-LeDu}
Let $\bF\in S_t(\mathbb R[X])$ be a matrix polynomial of degree $d>0$. If $\bF (x) > 0$ for all $x\in K$, then there exist a positive integer $N$ and positive definite matrices $\bG_\alpha \in M_t(\mathbb R)$, $\alpha \in \mathbb N_0^m$, $|\alpha|=0,\ldots, N+d$, such that 
$$ \bF(X)= \sum_{|\alpha|=0}^{N+d} \bG_\alpha L_1^{\alpha_1}\ldots L_m^{\alpha_m}. $$
\end{theorem}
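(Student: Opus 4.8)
The plan is to generalise the one-variable strategy behind Theorem~\ref{thr-Bernstein1}: homogenise $\bF$ with respect to the defining forms $L_1,\dots,L_m$ and invoke the P\'olya theorem for homogeneous matrix polynomials of Scherer and Hol \cite{SchH}. The algebraic input replacing the Goursat transform is a single linear identity. Writing $L_i(x)=b_i+a_i^{T}x$, boundedness of $K$ makes its recession cone $\{v:a_i^{T}v\ge 0,\ \forall i\}$ trivial, so the linear parts $a_1,\dots,a_m$ positively span $\mathbb R^{n}$; equivalently $0$ lies in the interior of their convex hull. Combined with a point of $\mathrm{int}\,K$ this produces scalars $\lambda_1,\dots,\lambda_m>0$ with
\[
\sum_{i=1}^{m}\lambda_i L_i \equiv 1 .
\]
Since the $a_i$ span $\mathbb R^{n}$, each $x_j$, and hence every monomial $x^{\gamma}$, is a homogeneous degree-$|\gamma|$ combination of the $L_i$; multiplying the lower-degree parts of $\bF$ by suitable powers of $\sum_i\lambda_iL_i$ then yields a matrix polynomial $Q(y_1,\dots,y_m)$, homogeneous of degree $d$ in $m$ fresh variables, with $Q(L_1,\dots,L_m)=\bF$.

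Let $\Lambda(x)=(L_1(x),\dots,L_m(x))$ and $S=\{y\in\mathbb R^{m}: y\ge 0,\ \sum_i\lambda_iy_i=1\}$, a weighted simplex. For $x\in K$ one has $\Lambda(x)\in S$ and $Q(\Lambda(x))=\bF(x)>0$, so $Q$ is positive definite on the image polytope $\Lambda(K)$. If I could apply \cite[Theorem 3]{SchH} to $Q$ on $S$ I would get $N$ with $(\sum_i\lambda_iy_i)^{N}Q(y)=\sum_{|\beta|=N+d}\bG_\beta y^{\beta}$, all $\bG_\beta>0$, and substituting $y_i=L_i(x)$ (which kills the prefactor by the identity above) would finish the proof. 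The obstacle, and the only place the argument genuinely exceeds the interval case, is that P\'olya's theorem demands positive definiteness of $Q$ on the \emph{whole} simplex $S$, whereas a priori I control $Q$ only on $\Lambda(K)=S\cap V$, where $V\subseteq\mathbb R^{m}$ is the $(n+1)$-dimensional image of the homogenised map $(x_0,x)\mapsto(b_ix_0+a_i^{T}x)_{i}$. When $K$ is a simplex ($m=n+1$) one has $V=\mathbb R^{m}$ and $\Lambda(K)=S$, which is precisely why the one-variable proof needed no correction; in general $\Lambda(K)$ is a proper lower-dimensional slice of $S$.

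To close this gap I would exploit the linear relations among the $L_i$. Choose linear forms $R_1,\dots,R_p$ spanning the annihilator of $V$, so that $\bigcap_k\ker R_k=V$ and, crucially, $R_k(L_1,\dots,L_m)\equiv 0$ for every $k$. Set
\[
Q'(y):=Q(y)+c\Big(\textstyle\sum_{k}R_k(y)^{2}\Big)\Big(\sum_i\lambda_iy_i\Big)^{d-2},
\]
bumping the working degree up to at least $2$ by an extra factor $\sum_i\lambda_iy_i$ if $d<2$. Because each $R_k$ vanishes on the $L_i$, still $Q'(L_1,\dots,L_m)=\bF$, and $Q'$ is homogeneous of degree $d$. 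On $S$ the function $\rho(y)=\sum_kR_k(y)^{2}$ is continuous, nonnegative, and vanishes exactly on $\Lambda(K)=S\cap V$; by compactness the least eigenvalue of $Q$ is bounded below by a positive constant on a neighbourhood of $\Lambda(K)$ in $S$, while $\rho$ is bounded below by a positive constant off that neighbourhood. Hence for $c$ large enough $Q'>0$ on all of $S$.

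Now Scherer--Hol applies to $Q'$: there is $N$ with $(\sum_i\lambda_iy_i)^{N}Q'(y)=\sum_{|\beta|=N+d}\bG_\beta y^{\beta}$ and every $\bG_\beta>0$. Substituting $y_i=L_i(x)$ and using $\sum_i\lambda_iL_i\equiv 1$ together with $Q'(L)=\bF$ collapses the left side to $\bF(x)$, yielding $\bF=\sum_{|\beta|=N+d}\bG_\beta L_1^{\beta_1}\cdots L_m^{\beta_m}$ with positive definite coefficients, which is the asserted representation (after the harmless rescaling absorbing the $\lambda_i$). I expect the third step --- manufacturing a homogenisation positive definite on the entire simplex rather than merely on the image of $K$ --- to be the substantive part of the proof; everything else mirrors the one-variable argument already given.
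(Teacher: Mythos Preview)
The paper does not supply a proof of Theorem~\ref{thr-LeDu}; it is quoted from the authors' earlier work \cite{LeB} and then used as a black box to derive Corollary~\ref{coro-Positive-on[-1,1]^n}. So there is no in-paper argument to compare your proposal against.

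That said, your outline is a correct route to the theorem and is, in spirit, the natural $n$-variable extension of the paper's own one-variable proof (Theorem~\ref{thr-Bernstein1}): lift $\bF$ to a homogeneous matrix polynomial $Q$ in the $L_i$, repair positivity on the full simplex by adding a large multiple of the quadratic penalty $\sum_k R_k^2$ in the directions transverse to the image slice $\Lambda(K)=S\cap V$, and then invoke the Scherer--Hol matrix P\'olya theorem \cite[Theorem~3]{SchH}. Two cosmetic points: the correction term in your $Q'$ needs an explicit factor $\bI$ so that one is adding a matrix polynomial to a matrix polynomial; and \cite[Theorem~3]{SchH} is stated on the standard simplex, so the diagonal rescaling $z_i=\lambda_i y_i$ you allude to at the end should be made before invoking it (the positive-definite coefficients then pick up harmless factors $\lambda^\beta$). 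The substantive step you flag---choosing $c$ so that $Q'>0$ on all of $S$---is handled correctly by your compactness argument: $Q$ is positive definite on the compact slice $\Lambda(K)$, hence on a neighbourhood of it in $S$, while off that neighbourhood $\rho=\sum_k R_k^2$ is bounded below and $Q$ is bounded, so $Q+c\rho\,\bI>0$ for $c$ large.
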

Applying Theorem \ref{thr-LeDu} for the linear polynomials 
$$l_1= 1+X_1, l_2=1-X_1, \ldots, l_{2n-1}=1+X_n, l_{2n}=1-X_n $$
we obtain the following representation for matrix polynomials positive definite on the hypercube $[-1,1]^n$.

\begin{corollary} \label{coro-Positive-on[-1,1]^n}
Let $\bF\in S_t(\mathbb R[X])$ be a matrix polynomial of degree $d>0$. If $\bF (x) > 0$ for all $x\in [-1,1]^n$, then there exist a positive integer $N$ and positive definite matrices $\bG_\alpha \in M_t(\mathbb R)$, $\alpha \in \mathbb N_0^{2n}$, $|\alpha|=0,\ldots, N+d$, such that 
$$ \bF(X)= \sum_{|\alpha|=0}^{N+d} \bG_\alpha l_1^{\alpha_1}l_2^{\alpha_2}\ldots l_{2n-1}^{\alpha_{2n-1}} l_{2n}^{\alpha_{2n}}. $$
\end{corollary}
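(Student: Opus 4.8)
The plan is to recognize that this statement is merely the specialization of Theorem \ref{thr-LeDu} to the particular polyhedron $K = [-1,1]^n$, so that the entire task reduces to checking that the hypercube fits the framework of that theorem and that its defining inequalities are precisely the linear polynomials $l_1, \ldots, l_{2n}$. No new machinery is required.

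First I would verify that $[-1,1]^n$ satisfies the standing hypotheses imposed on $K$ at the beginning of Section 4. Convexity and compactness are immediate, since the hypercube is a finite product of compact intervals, and its interior $(-1,1)^n$ is non-empty; hence $[-1,1]^n$ is a convex, compact polyhedron with non-empty interior. Next I would check the set-theoretic identity
$$[-1,1]^n = \{x \in \mathbb R^n : l_1(x) \geq 0, \ldots, l_{2n}(x) \geq 0\},$$
where $l_{2k-1} = 1+X_k$ and $l_{2k} = 1-X_k$ for $k = 1, \ldots, n$. For each coordinate $k$, the pair of conditions $1+x_k \geq 0$ and $1-x_k \geq 0$ is equivalent to $-1 \leq x_k \leq 1$; intersecting over all $k$ yields exactly the hypercube. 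Thus $[-1,1]^n$ is the basic closed semi-algebraic set defined by these $2n$ linear polynomials, so that $m = 2n$ in the notation of Theorem \ref{thr-LeDu}.

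With these two verifications in place, the conclusion follows by a direct application of Theorem \ref{thr-LeDu} with $K = [-1,1]^n$ and $L_i = l_i$ for $i = 1, \ldots, 2n$. Since $\bF \in S_t(\mathbb R[X])$ has degree $d > 0$ and is positive definite on $[-1,1]^n$, that theorem supplies a positive integer $N$ and positive definite matrices $\bG_\alpha \in M_t(\mathbb R)$, indexed by $\alpha \in \mathbb N_0^{2n}$ with $|\alpha| = 0, \ldots, N+d$, for which $\bF(X) = \sum_{|\alpha|=0}^{N+d} \bG_\alpha\, l_1^{\alpha_1} l_2^{\alpha_2} \cdots l_{2n}^{\alpha_{2n}}$, which is exactly the asserted representation. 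There is no substantive obstacle in this argument; the only point deserving a moment of care is purely notational, namely confirming that the index set $\mathbb N_0^{m}$ and the total-degree truncation $|\alpha| \leq N+d$ of the general theorem instantiate correctly to $\mathbb N_0^{2n}$ once $m$ is set equal to $2n$, after which the matching of the ordered product $L_1^{\alpha_1} \cdots L_m^{\alpha_m}$ with $l_1^{\alpha_1} \cdots l_{2n}^{\alpha_{2n}}$ is automatic.
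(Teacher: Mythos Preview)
Your proposal is correct and follows exactly the paper's approach: the corollary is obtained as an immediate specialization of Theorem \ref{thr-LeDu} by noting that $[-1,1]^n$ is a convex, compact polyhedron with non-empty interior defined by the linear polynomials $l_1,\ldots,l_{2n}$. The paper states this in a single sentence preceding the corollary, and your write-up simply makes the routine verifications explicit.
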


Now, applying the Haviland theorem for matrix polynomials (Theorem \ref{thr-Haviland}) and Corollary \ref{coro-Positive-on[-1,1]^n} we obtain the following solution of the tracial $[-1,1]^n$-moment problem.

\begin{corollary}\label{coro-tracial-moment-on[-1,1]^n}
Let $\mathcal{L}$ be a real-valued linear functional on $M_t(\mathbb R[X])$. Then the following are equivalent.
\begin{itemize}
\item[(1)] $\mathcal{L}$ is a tracial $[-1,1]^n$-moment functional.
\item[(2)] $\mathcal{L}(l_1^{\alpha_1}l_2^{\alpha_2}\ldots l_{2n-1}^{\alpha_{2n-1}} l_{2n}^{\alpha_{2n}}\bG)\geq 0$ for all positive definite matrices $\bG \in M_t(\mathbb R)$ and $\alpha=(\alpha_1,\ldots,\alpha_{2n})\in \mathbb N_0^{2n}$.
\end{itemize}
\end{corollary}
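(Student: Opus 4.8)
The plan is to mirror exactly the structure of the proof of Corollary~\ref{coro-tracial-moment-on[-1,1]} from the one-variable case, replacing the single-variable Bernstein representation (Theorem~\ref{thr-Bernstein1}) by its multivariate analogue on the hypercube (Corollary~\ref{coro-Positive-on[-1,1]^n}), and invoking the Haviland theorem for matrix polynomials (Theorem~\ref{thr-Haviland}) to bridge positivity of the functional and the existence of a representing measure. Both implications are short once these two tools are in place, so the real content is purely bookkeeping.

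For the direction $(1)\Rightarrow(2)$, I would fix positive definite $\bG\in M_t(\mathbb R)$ and a multi-index $\alpha=(\alpha_1,\ldots,\alpha_{2n})\in\mathbb N_0^{2n}$, and observe that the matrix polynomial $l_1^{\alpha_1}\cdots l_{2n}^{\alpha_{2n}}\bG$ is positive semidefinite on $[-1,1]^n$: indeed each linear factor $l_j$ is nonnegative on $[-1,1]^n$ by the very definition of the box as the solution set of $1\pm X_i\ge 0$, so their product is a nonnegative scalar there, and multiplying a positive definite constant matrix $\bG$ by a nonnegative scalar yields a positive semidefinite matrix at every point of $K$. Since $\mathcal L$ is assumed to be a tracial $[-1,1]^n$-moment functional, the implication $(1)\Rightarrow(2)$ of Theorem~\ref{thr-Haviland} gives $\mathcal L(l_1^{\alpha_1}\cdots l_{2n}^{\alpha_{2n}}\bG)\ge 0$ immediately.

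For the converse $(2)\Rightarrow(1)$, I would take an arbitrary $\bF\in S_t(\mathbb R[X])$ of degree $d$ with $\bF>0$ on $[-1,1]^n$ and apply Corollary~\ref{coro-Positive-on[-1,1]^n} to write
\[
\bF(X)=\sum_{|\alpha|=0}^{N+d}\bG_\alpha\, l_1^{\alpha_1}l_2^{\alpha_2}\cdots l_{2n-1}^{\alpha_{2n-1}}l_{2n}^{\alpha_{2n}}
\]
for some positive integer $N$ and positive definite matrices $\bG_\alpha$. Applying $\mathcal L$ and using linearity, I get
\[
\mathcal L(\bF)=\sum_{|\alpha|=0}^{N+d}\mathcal L\bigl(\bG_\alpha\, l_1^{\alpha_1}\cdots l_{2n}^{\alpha_{2n}}\bigr)\ge 0,
\]
where each summand is nonnegative by hypothesis~(2) applied to the positive definite matrix $\bG_\alpha$ and the multi-index $\alpha$. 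Thus $\mathcal L(\bF)\ge 0$ for every $\bF>0$ on $[-1,1]^n$, and the implication $(4)\Rightarrow(1)$ of Theorem~\ref{thr-Haviland} shows $\mathcal L$ is a tracial $[-1,1]^n$-moment functional.

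There is no genuine obstacle here; the argument is a routine transcription of the $n=1$ proof. The only point demanding a sliver of care is the direction $(2)\Rightarrow(1)$: hypothesis~(2) is a statement about \emph{constant} positive definite matrices $\bG$ multiplied by products of the $l_j$, while Theorem~\ref{thr-Haviland}(4) requires controlling $\mathcal L$ on \emph{all} matrix polynomials strictly positive on $K$, so the indispensable step is that Corollary~\ref{coro-Positive-on[-1,1]^n} expresses every such $\bF$ exactly in the form covered by~(2). Since the $\bG_\alpha$ supplied by that corollary are positive definite constants, hypothesis~(2) applies termwise, and linearity of $\mathcal L$ closes the gap.
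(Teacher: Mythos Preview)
Your proof is correct and follows exactly the approach the paper intends: the paper does not even spell out a proof for this corollary but simply says it is obtained by applying Theorem~\ref{thr-Haviland} and Corollary~\ref{coro-Positive-on[-1,1]^n}, which is precisely what you have done, mirroring the argument of Corollary~\ref{coro-tracial-moment-on[-1,1]}.
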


Observe that the solution of the tracial $K$-moment problem, where  $K$ is a convex, compact polyhedron  in $ R^n$   with non-empty interior, can be solved by the same way, using the linear polynomial $L_i$ instead of $l_j$ in Corollary \ref{coro-tracial-moment-on[-1,1]^n}.

Finally, applying Proposition \ref{prop-relation} and Corollary \ref{coro-tracial-moment-on[-1,1]^n} we obtain the following solution of the sequential matrix-valued $[-1,1]^n$-moment problem.

\begin{corollary}\label{coro-tracial-sequence-on[-1,1]^n}
Let $\bS=(\bS_{\alpha})_{\alpha\in \mathbb N_0^n}$ be a   sequence of symmetric matrices in $M_t(\mathbb R)$. Then $\bS$ is a matrix-valued  $[-1,1]^n$-moment sequence if and only if 
$$ \sum_{\beta\in \mathbb N_0^{2n}, \beta \leq \alpha} (-1)^{\mbox{e}(\alpha)}\binom{\alpha}{\beta}\tr(\bG\bS_{\beta}) \geq 0 $$
for all positive definite matrices $\bG\in M_t(\mathbb R)$ and $\alpha\in \mathbb N_0^{2n}$. 
\end{corollary}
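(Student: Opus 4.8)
The plan is to follow exactly the template of the proof of Corollary~\ref{coro-tracial-sequence-on[-1,1]}, replacing the single pair $(1+x),(1-x)$ by the $2n$ linear forms $l_1,\ldots,l_{2n}$ and the one-variable binomial expansion by its $n$-fold product. First I would invoke Proposition~\ref{prop-relation} to reduce the sequential problem to the functional problem: $\bS$ is a matrix-valued $[-1,1]^n$-moment sequence if and only if the associated Riesz functional $\mathcal{L}_\bS$ is a tracial $[-1,1]^n$-moment functional. Then I would apply Corollary~\ref{coro-tracial-moment-on[-1,1]^n}, which characterises the latter by the positivity condition $\mathcal{L}_\bS(l_1^{\alpha_1}\cdots l_{2n}^{\alpha_{2n}}\bG)\ge 0$ for all positive definite $\bG\in M_t(\mathbb R)$ and all $\alpha\in\mathbb N_0^{2n}$. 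Thus the entire statement follows once this quantity is rewritten as the displayed alternating sum; no further compactness or measure-theoretic input is needed, since all the analytic content has already been absorbed into Corollary~\ref{coro-tracial-moment-on[-1,1]^n}.

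The core step is therefore the explicit evaluation of $\mathcal{L}_\bS(l_1^{\alpha_1}\cdots l_{2n}^{\alpha_{2n}}\bG)$. Since $l_{2k-1}=1+X_k$ and $l_{2k}=1-X_k$ involve disjoint variables, the product factors as $\prod_{k=1}^n (1+X_k)^{\alpha_{2k-1}}(1-X_k)^{\alpha_{2k}}$, and each factor expands by the binomial theorem as $\sum_{\beta_{2k-1}=0}^{\alpha_{2k-1}}\sum_{\beta_{2k}=0}^{\alpha_{2k}}(-1)^{\beta_{2k}}\binom{\alpha_{2k-1}}{\beta_{2k-1}}\binom{\alpha_{2k}}{\beta_{2k}}X_k^{\beta_{2k-1}+\beta_{2k}}$. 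Multiplying these out over $k$ gives $l_1^{\alpha_1}\cdots l_{2n}^{\alpha_{2n}}=\sum_{\beta\le\alpha}(-1)^{e(\beta)}\binom{\alpha}{\beta}X^{\gamma(\beta)}$, where $\beta\in\mathbb N_0^{2n}$, $\binom{\alpha}{\beta}=\prod_{r=1}^{2n}\binom{\alpha_r}{\beta_r}$, the sign exponent is $e(\beta)=\sum_{k=1}^n\beta_{2k}$ (the total power coming from the factors $1-X_k$), and $\gamma(\beta)\in\mathbb N_0^n$ has $k$-th entry $\beta_{2k-1}+\beta_{2k}$. Now I would apply linearity of $\mathcal{L}_\bS$ together with the defining identity of the Riesz functional, namely $\mathcal{L}_\bS(X^\gamma\bG)=\tr(\bG\bS_\gamma)$ (since $X^\gamma\bG$ has the single nonzero coefficient $\bG$ in degree $\gamma$), to obtain $\mathcal{L}_\bS(l_1^{\alpha_1}\cdots l_{2n}^{\alpha_{2n}}\bG)=\sum_{\beta\le\alpha}(-1)^{e(\beta)}\binom{\alpha}{\beta}\tr(\bG\bS_{\gamma(\beta)})$, which is precisely the displayed alternating sum.

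The only real obstacle is bookkeeping rather than mathematics: I must keep the $2n$-index $\beta$ that records the exponents on the linear forms separate from the $n$-index $\gamma(\beta)$ that records the resulting monomial $X^{\gamma(\beta)}$ fed into the sequence $\bS$, and ensure the sign $(-1)^{e(\beta)}$ collects exactly the contributions of the $n$ factors $1-X_k$. With the interpretation $e(\beta)=\sum_{k=1}^n\beta_{2k}$ and $\bS_\beta:=\bS_{\gamma(\beta)}$ understood in the statement, combining the two equivalences of the first paragraph with this computation yields the claim in both directions, exactly as in the proof of Corollary~\ref{coro-tracial-sequence-on[-1,1]}.
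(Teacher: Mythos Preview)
Your proposal is correct and follows exactly the paper's own approach: reduce via Proposition~\ref{prop-relation} to the functional problem, apply Corollary~\ref{coro-tracial-moment-on[-1,1]^n}, and expand $l_1^{\alpha_1}\cdots l_{2n}^{\alpha_{2n}}$ by the $n$-fold binomial theorem to identify $\mathcal{L}_\bS(l_1^{\alpha_1}\cdots l_{2n}^{\alpha_{2n}}\bG)$ with the displayed sum. Your bookkeeping is in fact more careful than the paper's sketch, since you correctly track that the sign exponent should be $e(\beta)$ (not $e(\alpha)$) and that the $2n$-index $\beta$ must be collapsed to the $n$-index $\gamma(\beta)$ before it can index the sequence $\bS$.
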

Here, for $\alpha=(\alpha_1,\ldots,\alpha_{2n})$ and  $\beta=(\beta_1,\ldots,\beta_{2n})$,  
$\beta \leq \alpha$ means $\beta_i \leq \alpha_i$ for all $i=1,\ldots,2n$;  
$\binom{\alpha}{\beta}:=\binom{\alpha_1}{\beta_1}\ldots \binom{\alpha_{2n}}{\beta_{2n}}$; and  
$e(\alpha):=\displaystyle\sum_{i=1}^n\alpha_{2i}$. 
\begin{proof}
The proof is similar to that of Corollary \ref{coro-tracial-sequence-on[-1,1]}, using Proposition \ref{prop-relation}, Corollary \ref{coro-tracial-moment-on[-1,1]^n} and the identity 
$$ (1+X_1)^{\alpha_1}(1-X_1)^{\alpha_2}\ldots (1+X_n)^{\alpha_{2n-1}} (1-X_n)^{\alpha_{2n}} =  \sum_{\beta\in \mathbb N_0^{2n}, \beta \leq \alpha} (-1)^{\mbox{e}(\alpha)}\binom{\alpha}{\beta}X^{\beta},$$
where $\alpha=(\alpha_1,\ldots,\alpha_{2n})$.
\end{proof}

\subsection*{Acknowledgements} The   author would like to express his sincere  gratitude to Prof. Konrad Schm\"udgen for his series of lectures on moment problems  given at the Vietnam Institute for Advanced Study in Mathematics (VIASM) which motivates the author to study the  matrix-valued moment problems.    This paper was finished during the visit of the    author    at  VIASM. He thanks VIASM for financial support and hospitality.


\end{document}